\documentclass[pdflatex,sn-mathphys-num]{sn-jnl}
\usepackage[T1]{fontenc}
\usepackage[utf8]{inputenc}
\usepackage{amsmath,amssymb,amsthm}
\usepackage{enumerate}
\usepackage{color}
\hypersetup{hidelinks,pdftitle={The spherical cap conjecture for immersed disks},
            pdfauthor={Jos\'e M. Espinar}}

\theoremstyle{thmstyleone}
\newtheorem{theorem}{Theorem}[section]
\newtheorem{lemma}[theorem]{Lemma}

\theoremstyle{thmstylethree}
\newtheorem{remark}[theorem]{Remark}

\newcommand{\R}{\mathbb{R}}
\newcommand{\Hq}{\mathbb{H}}
\newcommand{\Sf}{\mathbb{S}}
\newcommand{\Id}{\mathrm{Id}}
\newcommand{\SU}{\mathrm{SU}(2)}
\newcommand{\SO}{\mathrm{SO}(3)}
\newcommand{\tr}{\mathrm{tr}}
\newcommand{\ii}{\mathbf{i}}
\newcommand{\jj}{\mathbf{j}}
\newcommand{\kk}{\mathbf{k}}

\newcommand{\A}{\mathbf{A}}
\begin{document}

\title{The spherical cap conjecture for immersed disks}
\author*{\fnm{Jos\'e M.} \sur{Espinar}}\email{jespinar@ugr.es}
\affil{\orgdiv{Departamento de Geometr\'ia y Topolog\'ia and Instituto de
Matem\'aticas (IMAG)}, \orgname{Universidad de Granada},
\orgaddress{\city{Granada}, \postcode{18071}, \country{Spain}}}

\keywords{Constant mean curvature, circular boundary, spherical cap, contact
angle, line of curvature, Lawson correspondence}

\abstract{We prove that every smooth immersed disk in Euclidean three-space with
nonzero constant mean curvature, regular up to the boundary and mapping its
boundary diffeomorphically onto a circle, is an embedded spherical cap.}

\pacs[Mathematics Subject Classification (2020)]{Primary 53A10, Secondary 53C42}

\maketitle

\begingroup\raggedright
\bmhead{Acknowledgements} Jos\'e M. Espinar is partially supported by MINECO/\allowbreak MICINN/\allowbreak FEDER grant no PID2024-\allowbreak160586NB-I00 and by the ``Maria de Maeztu'' Excellence Unit IMAG, reference CEX2020-001105-M, funded by \allowbreak MCINN/AEI/10.13039/501100011033/CEX2020-001105-M.\par
\endgroup

\section{Introduction}

The Spherical Cap Conjecture goes back to a problem of R. Gulliver and
R. Kusner at the 1984 Arcata conference, recorded as Problem 1.4 of the
collection edited by J. Brothers \cite{Brothers1986}, and it was formulated
explicitly by R. S\'a Earp, F. Brito, W. H. Meeks III and H. Rosenberg
\cite{SBMR}. In its immersed form it reads:

\begin{quote}
{\bf The Spherical Cap Conjecture.} \emph{Spherical caps are the only
constant mean curvature disks immersed in $\R^3$ with $H\neq0$ and circular
boundary.}
\end{quote}

We prove it.

\begin{quote}
{\bf Theorem A.} \emph{Let $\Sigma$ be a compact disk and let
$X:\Sigma\to\R^3$ be a smooth immersion with constant mean curvature
$H\neq0$, regular up to the boundary, mapping $\partial\Sigma$
diffeomorphically onto a circle $\Gamma$. Then $X$ is an embedding onto a
closed cap of a sphere of radius $1/|H|$.}
\end{quote}

The disk hypothesis is not a technical one. N. Kapouleas
\cite[Theorem 4.3 and Example 4.4]{Kapouleas1991} constructed, for every
genus $g\geq3$ and every radius $r\in(0,1)$, infinitely many compact
immersed constant mean curvature surfaces of genus $g$ with $H\equiv1$
whose boundary is a round planar circle of radius $r$. According to
\cite[Section 2]{Lopez2025} these examples have self-intersections and lie
in one of the half-spaces determined by the boundary plane. So the immersed
problem without the disk hypothesis is false, and it is false in every
genus $g\geq3$. The conjecture for compact embedded surfaces of arbitrary
genus, also stated in \cite{SBMR}, remains open.

E. Heinz \cite{Heinz1969} proved that necessarily $|H|r\leq1$ (see also
\cite[Corollary 5.1.8]{Lopez2013}), and the extremal case $|H|r=1$ forces
the hemisphere, with no hypothesis on the topology
\cite[Corollary 5.1.11]{Lopez2013}, that case having been solved earlier by
F. Brito and R. S\'a Earp \cite{BritoSaEarp1991}. J. L. Barbosa
\cite{Barbosa1990} proved that such a surface contained in a closed ball of
radius $1/|H|$ is a spherical cap, and obtained the same conclusion in a
later paper for a surface contained in a closed solid right cylinder of
radius $1/|H|$; see \cite[Theorems 5.3.1 and 5.3.5]{Lopez2013}, where
neither statement carries a hypothesis on the topology. R. L\'opez and
S. Montiel \cite{LopezMontiel1995} proved that a disk as above has area at
least that of the small cap, with equality only for the small cap, and,
combining this with the isoperimetric inequality of J. L. Barbosa and
M. do Carmo \cite{BarbosadoCarmo}, that a disk whose area does not exceed
the area of the large cap is a spherical cap; see also
\cite{LopezMontiel1996}. P. A. Hinojosa \cite{Hinojosa2006} sharpened those
area and volume estimates and obtained the spherical cap under an
additional area or volume bound. L. J. Al\'ias, R. L\'opez and B. Palmer
\cite{ALP} proved that a \emph{stable} disk is a planar disk or a spherical
cap. See \cite{Lopez2025} for a recent survey.

Throughout, $\Sigma$ is a compact disk and $X:\Sigma\to\R^3$ is a smooth
immersion with constant mean curvature $H$, regular up to the boundary,
mapping $\partial\Sigma$ diffeomorphically onto a circle $\Gamma$ of radius
$r$ contained in a plane $P$. Let $I$ be the induced metric and let $J$ be
the rotation by $\pi/2$ in $T\Sigma$. We use $dN=-dX\circ S$ and
$H=\frac12\tr S$ throughout, so that a sphere of radius $R$ with its
outward normal has $H=-1/R$; see \eqref{eq:conventions}. Up to a homothety, 
we can assume $H=1$ without loss of generality. 

The proof goes as follows. We identify $\R^3$ with the imaginary quaternions. For constant $H$ the
one-form $\beta:=H\,dX\circ J$ is flat, so on a disk it develops into a map
$q:\Sigma\to\Sf^3$ with $dq=q\beta$, single valued and smooth up to and
including the boundary; for $H\neq0$ the map $q/H$ is the minimal cousin of
$X$ in $\Sf^3(1/|H|)$, and this is the Lawson correspondence
\cite[Section 12]{Lawson1970} in the quaternionic form of
K. Gro\ss e-Brauckmann, R. B. Kusner and J. M. Sullivan
\cite[Theorem 1.1 and Proposition 1.2]{GKS}.  Along $\Gamma$, the 
developing equation becomes an ordinary differential
equation in $\Sf^3$ driven by the contact angle. The change of frame in
Lemma~\ref{lem:boundary} produces an auxiliary curve whose energy is
fixed by the flux at $\pi^2/L$, where $L=2\pi r$. On a disk the developing
map closes, so the auxiliary curve joins antipodal points of the unit
sphere. Its energy is therefore the least possible for those endpoints,
and equality forces constant speed and hence a constant contact angle.
Thus $\Gamma$ is a line of curvature of $\Sigma$, and Nitsche's theorem
gives the conclusion.

The paper is organized as follows. Section \ref{sec:prelim} fixes the
notation and recalls what we use of the Lawson correspondence and of the
flux formula. Section \ref{sec:boundary} contains the boundary lemma, which
is the heart of the paper. Section \ref{sec:cap} passes from the boundary to the 
cap and proves Theorem A.

\section{Preliminaries}\label{sec:prelim}

In this section we fix the notation and recall the two facts we use,
namely the Lawson correspondence in its quaternionic first order form and
the flux formula.

\subsection{Quaternions}
We use the standard identification of $\R^4$ with the quaternions,
\[
 \Hq=\R\oplus\mathrm{Span}\{\ii,\jj,\kk\}=\R\oplus\Im(\Hq),
\]
where ${\bf 1}=(1,0,0,0)$, $\ii=(0,1,0,0)$, $\jj=(0,0,1,0)$ and
$\kk=(0,0,0,1)$, and we identify $\Im(\Hq)$ with $\R^3$ in the usual
manner, that is, $\ii,\jj,\kk$ identify with the canonical basis of $\R^3$
by removing the real part; in particular $\ii\jj=\kk$. For $p,q\in\Hq$ we
have as usual:
\begin{enumerate}[(Q1)]
\item Decomposition into real and imaginary parts:
 $p=\Re(p)+\Im(p)$.
\item Product of quaternions:
 \[
  p\cdot q=\bigl(\Re(p)\Re(q)-\langle\Im(p),\Im(q)\rangle\bigr)
  +\bigl[\Re(p)\Im(q)+\Re(q)\Im(p)+\Im(p)\times\Im(q)\bigr],
 \]
 hence $u\cdot v=-\langle u,v\rangle+u\times v$ and
 $\langle u,v\rangle=-\Re(u\cdot v)$ for all $u,v\in\Im(\Hq)$.
\item Conjugate of a quaternion: $\overline p=\Re(p)-\Im(p)$.
\item Norm: $\|p\|=\sqrt{\overline p\cdot p}$, and hence
 $\|p\cdot q\|=\|p\|\,\|q\|$.
\item $u\cdot v-v\cdot u=2\,u\times v$ for all $u,v\in\Im(\Hq)$.
\item $\langle p\cdot u,q\rangle+\langle p,q\cdot u\rangle=0$ for all
 $p,q\in\Hq$ and all $u\in\Im(\Hq)$.
\end{enumerate}

\subsection[The sphere S3 and SU(2)]{The sphere $\Sf^3$ and $\SU$}
We use the standard model of the three dimensional sphere inside $\Hq$ as
the unit quaternions,
\[
 \Sf^3=\{p\in\Hq:\ \langle p,p\rangle=1\},
\]
endowed with its round metric of constant sectional curvature one, given
by the restriction to $\Sf^3$ of the Euclidean product of
$\R^4\equiv\Hq$. Thus $(\Sf^3,\cdot)$ is a Lie group with identity element
${\bf 1}$, and $T_{{\bf 1}}\Sf^3=\Im(\Hq)$ identifies with its Lie algebra. The
identities
\[
 \langle p\cdot q,p\cdot w\rangle=\|p\|^2\langle q,w\rangle
 =\langle q\cdot p,w\cdot p\rangle,\qquad p,q,w\in\Hq,
\]
tell us that, given $p\in\Sf^3$, the left and right translations
$l_p(q)=p\cdot q$ and $r_p(q)=q\cdot p$ are isometries of
$(\Hq,\langle\cdot,\cdot\rangle)$; in particular the round metric of
$\Sf^3$ is bi-invariant. We identify $\Sf^3$ with $\SU$ in the usual 
way. We write ${\bf 1}$ throughout for the identity quaternion,
which is the identity element of $\Sf^3$ and the unit of $\Hq$. We 
also omit the product sign $\cdot $ when no confusion occurs.

\subsection{Conventions for the immersion}
Let $\Sigma$ be a compact connected oriented surface with boundary and let
$X:\Sigma\to\R^3$ be a smooth immersion, regular up to $\partial\Sigma$,
with unit normal $N$, induced metric $I$ and shape operator $S$. We use
\begin{equation}\label{eq:conventions}
 dN=-dX\circ S,\qquad H=\tfrac12\tr S,\qquad dX(JU)=N\times dX(U),
\end{equation}
where $J$ is the rotation by $\pi/2$ in $T\Sigma$ determined by $I$ and by
the orientation. Thus $\Delta_IX=2HN$, and a sphere of radius $R$ with its
outward normal has $H=-1/R$. Regular up to the boundary means that $dX$
has rank two at every point of $\Sigma$, boundary points included.

Let $t$ be the unit tangent of $\partial\Sigma$ for the orientation
induced by that of $\Sigma$, and set $T=dX(t)$ and $\nu=dX(Jt)$, so that
$\{t,Jt\}$ is a positively oriented orthonormal basis of $T\Sigma$ along
$\partial\Sigma$. Then $\nu$ is the inward conormal and $N=T\times\nu$.

\subsection{The developing map}
Let $H\neq0$ be constant and set
\begin{equation}\label{eq:beta}
 \beta:=H\,dX\circ J,
\end{equation}
a one-form on $\Sigma$ with values in $\Im(\Hq)$. We look for a map
$q:\Sigma\to\Hq$ solving the first order system
\begin{equation}\label{eq:cousin}
 dq=q\cdot\beta ,
\end{equation}
whose integrability condition is $d\beta+\beta\wedge\beta=0$. Without
assuming that $H$ is constant, in positively oriented conformal
coordinates $z=u+iv$ one has $\beta=H(X_v\,du-X_u\,dv)$ and
$X_{uu}+X_{vv}=2HX_u\times X_v$, so that
\[
 d\beta=-\bigl(2H^2X_u\times X_v+H_uX_u+H_vX_v\bigr)du\wedge dv,
 \qquad
 \beta\wedge\beta=2H^2X_u\times X_v\,du\wedge dv
\]
for the convention
$(\beta\wedge\beta)(\partial_u,\partial_v)=\beta_u\beta_v-\beta_v\beta_u$,
where the second identity uses (Q5). Therefore
$d\beta+\beta\wedge\beta=-(H_uX_u+H_vX_v)\,du\wedge dv$, and
\eqref{eq:cousin} is integrable exactly when $H$ is constant.

We now normalize $H$. A homothety of $\R^3$ of ratio $\lambda>0$ replaces
$H$ by $H/\lambda$, and reversing the orientation of $\Sigma$ replaces $N$
by $-N$, $J$ by $-J$ and $H$ by $-H$; in both cases $\beta=H\,dX\circ J$ is
unchanged, and with it the developing map. A reversal
also reverses the induced orientation of $\partial\Sigma$, and a rotation
by $\pi$ about a diameter of $\Gamma$, which is orientation preserving,
restores the parametrization \eqref{eq:circle}. We may and do assume
therefore that
\begin{equation}\label{eq:normalization}
 H=1,\qquad\hbox{so that }\ \beta=dX\circ J ,
\end{equation}
the statements of the introduction, which are for an arbitrary constant
$H\neq0$, following by scaling back. 

By the Frobenius theorem, every choice of an initial condition
$q(p_0)=q_0$ determines a unique solution of \eqref{eq:cousin} on a simply
connected domain, and two solutions differ by a constant left factor. By
(Q6),
\[
 d\langle q,q\rangle=2\langle q\cdot\beta,q\rangle=0
\]
because $\beta$ takes values in $\Im(\Hq)$, so $\|q\|$ is constant on
$\Sigma$; since $q(p_0)\in\Sf^3$, the solution takes values in $\Sf^3$,
and it is unique up to left translations in $\Sf^3$. We call $q$ the
developing map of $X$.

The map $q$ is itself a minimal isometric immersion into the unit sphere
$\Sf^3$, with unit normal $\widetilde N=q\cdot N$. Indeed
$dq(U)=q\cdot dX(JU)$ by \eqref{eq:cousin}, $J$ is an isometry of $I$ and
left translations are isometries of $\Hq$, so $q$ is isometric; and $\langle q\cdot N,q\cdot
dX(JU)\rangle=\langle N,dX(JU)\rangle=0$ together with $\langle q\cdot
N,q\rangle=\langle N,{\bf 1}\rangle=0$ identifies the unit normal. This is the
Lawson correspondence \cite[Section 12]{Lawson1970} in the quaternionic
form of \cite[Theorem 1.1 and Proposition 1.2]{GKS}.

The main geometric objects associated with $X$ and with its cousin are
related as follows, with $\widetilde S$ the shape operator of $q$
determined by $d\widetilde N=-dq\circ\widetilde S$, and with $1\pm\mu$,
$\mu\geq0$, the principal curvatures of $X$:
\begin{eqnarray}
 \mbox{Unit normals:} & \widetilde N=q\cdot N .&\label{eq:table-normal}\\
 \mbox{Shape operators:} & J\circ\widetilde S=S-\Id .&
  \label{eq:table-shape}\\
 \mbox{Principal curvatures:} & \widetilde\kappa_\pm=\pm\mu .&
  \label{eq:table-curvatures}\\
 \mbox{Principal directions:} & \hbox{those of $\widetilde S$ bisect
  those of $S$.}&\label{eq:table-directions}
\end{eqnarray}

The first-order formulation of the Lawson correspondence
\cite[Theorem 1.1]{GKS} gives the following boundary extension by smooth
dependence on the initial data and parameters.

\begin{lemma}\label{lem:extension}
If $X$ is smooth up to a boundary arc, then $q$ extends smoothly to that
arc. In particular, on a compact disk $q$ is single valued and smooth on
the whole closed disk, so its restriction to the boundary is a closed
curve in $\SU$, and not only in $\SO$.
\end{lemma}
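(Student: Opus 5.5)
The plan is to reduce the statement to the existence of solutions of a linear Pfaffian system on a smooth compact manifold with boundary, and to read off both smoothness and single-valuedness from it.

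\textbf{Step 1: extend $\beta$.} Since $X$ is smooth up to the boundary arc and regular there, $dX$, $N$ and $J$ extend smoothly to the arc. Hence $\beta=H\,dX\circ J$ is a smooth $\Im(\Hq)$-valued one-form on $\Sigma$, boundary included. The integrability condition $d\beta+\beta\wedge\beta=0$ holds in the interior because $H$ is constant. By continuity it also holds up to the boundary.

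\textbf{Step 2: extend $\Sigma$ slightly.} Choose a slightly larger open surface $\Sigma'\supset\Sigma$, for instance by attaching a collar across $\partial\Sigma$, and a smooth extension $\beta'$ of $\beta$ to $\Sigma'$ (Seeley extension). The extension $\beta'$ need not be flat off $\Sigma$. To avoid this issue I would instead integrate directly on $\Sigma$ along curves:
\begin{itemize}
\item Fix $p_0$ in the interior and $q(p_0)=q_0$.
\item For any $p\in\Sigma$, boundary points included, join $p_0$ to $p$ by a piecewise smooth path $\gamma$ in the closed disk.
\item Solve the linear ODE $\dot q=q\,\beta(\dot\gamma)$ along $\gamma$.
\end{itemize}
This solution exists globally along $\gamma$ because the equation is linear with smooth coefficients on a compact interval.

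\textbf{Step 3: path independence.} Flatness of $\beta$ on the closed disk, together with the fact that the closed disk is simply connected, implies that the result does not depend on $\gamma$. The argument is the standard one: for a homotopy $\gamma_s$ with fixed endpoints, the $s$-derivative of $q_{\gamma_s}(1)$ is governed by the curvature $d\beta+\beta\wedge\beta=0$. This is exactly where \cite[Theorem 1.1]{GKS} applies in its first-order form. The homotopy may touch the boundary, which is harmless because every quantity involved is smooth there. The resulting map $q$ is therefore single valued on the whole closed disk and agrees with the interior developing map.

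\textbf{Step 4: smoothness at the boundary.} Use smooth dependence of ODE solutions on initial data and parameters. In a boundary chart $[0,\varepsilon)\times(-\varepsilon,\varepsilon)$, define $q(u,v)$ in two stages:
\begin{enumerate}
\item integrate in $v$ along $u=u_0$ from a fixed interior point;
\item then integrate in $u$ from $u_0$.
\end{enumerate}
Both stages are ODEs with coefficients depending smoothly on $(u,v)$ up to $u=0$, so $q$ is smooth up to $u=0$.

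\textbf{Step 5: conclusion.} The restriction $q|_{\partial\Sigma}$ is a closed curve in $\Sf^3=\SU$. It closes because $q$ is a genuine single-valued map on the disk, not merely defined up to sign. Its projection to $\SO$ is then of course closed as well.

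I expect the only delicate point to be Step 3, namely path independence through paths that run along the boundary. I would handle it by noting that every homotopy in the closed disk can be pushed into the interior, except at the endpoint, using a collar. Continuity of the ODE solution in $\gamma$ then finishes the argument.
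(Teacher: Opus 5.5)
Your proposal is correct and follows the paper's route. The paper justifies the lemma in one line: the first-order system $dq=q\beta$ from the quaternionic Lawson correspondence, together with smooth dependence of ODE solutions on initial data and parameters, which is what your Steps 1, 4 and 5 spell out. Your Step 3, path independence through homotopies touching the boundary, is more than you need: Frobenius already makes the developing map single valued on the open disk, so once Step 4 extends it smoothly to the boundary, single-valuedness on the closed disk and closure of $q|_{\partial\Sigma}$ in $\SU$ follow automatically.
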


\subsection{The boundary data}
Whenever the boundary is circular we normalize as follows. After an
orientation preserving rigid motion of $\R^3$ we may and do assume that
$X(\partial\Sigma)$ is the circle $\Gamma$ of radius $r$ in the plane
$\{\langle\cdot,\kk\rangle=0\}$, parametrized by its arclength
$s\in[0,L]$, $L=2\pi r$, as
\begin{equation}\label{eq:circle}
 \gamma(s)=r\Bigl(\cos\frac sr,\sin\frac sr,0\Bigr),
 \qquad e_r=\cos\frac sr\,\ii+\sin\frac sr\,\jj,
 \qquad T=-\sin\frac sr\,\ii+\cos\frac sr\,\jj,
\end{equation}
with $T$ the image under $dX$ of the positively oriented unit tangent of
$\partial\Sigma$. Since
$X|_{\partial\Sigma}$ is a diffeomorphism onto $\Gamma$ and $dX$ is an
isometry, $s$ is at the same time the $I$-arclength of $\partial\Sigma$.
Let us write
\begin{equation}\label{eq:ab}
 \nu=a\,e_r+b\,\kk,\qquad N=b\,e_r-a\,\kk,\qquad a^2+b^2=1,
\end{equation}
and locally $a=\cos\alpha$, $b=\sin\alpha$. The Darboux equations
$T'=k_g\nu+k_nN$ and $\nu'=-k_gT+\tau_gN$ then give
\begin{equation}\label{eq:darboux}
 k_g=-\frac ar,\qquad k_n=-\frac br,\qquad \tau_g=-\alpha' .
\end{equation}
Note that $a=\langle\nu,e_r\rangle$ measures the angle at which $\Sigma$
meets the plane of $\Gamma$, and $\langle N,\kk\rangle=-a$.

\subsection{The flux}
The first identity below is the balancing formula of N. Korevaar,
R. Kusner and B. Solomon \cite{KKS} in the form used for immersions by
R. L\'opez and S. Montiel \cite{LopezMontiel1996}, and for a circular
boundary it is in \cite{BritoSaEarp1991}; see also
\cite[Section 3.1]{BranderLopez} and \cite[Corollary 5.1.8]{Lopez2013}. 
With our choice of inward conormal, the identities recalled in these
references take the following form.

\begin{lemma}\label{lem:flux}
Let $\Sigma$ be compact connected oriented and let $X$ be as in
\S\ref{sec:prelim}, normalized as in \eqref{eq:normalization}. Then
\begin{equation}\label{eq:flux}
 \int_{\partial\Sigma}\nu\,ds=-2\,\A,
 \qquad \A:=\frac12\int_{\partial\Sigma}X\times dX=\int_\Sigma N\,dA .
\end{equation}
If moreover $\partial\Sigma$ is connected, $X(\partial\Sigma)$ is the
circle \eqref{eq:circle} and $a,b$ are as in \eqref{eq:ab}, then
\begin{equation}\label{eq:scalarflux}
 \int_0^Lb\,ds=-rL,\qquad\hbox{and consequently}\qquad r\leq1 .
\end{equation}
\end{lemma}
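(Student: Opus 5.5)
The plan is to derive both identities from the equation $\Delta_IX=2HN=2N$ of \eqref{eq:conventions}, using the divergence theorem once and Stokes' theorem once, and then to evaluate everything explicitly on the circle \eqref{eq:circle}.

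\emph{Step 1: the flux in terms of $\int_\Sigma N\,dA$.} We integrate $\Delta_IX=2N$ over $\Sigma$. The outward unit conormal of $\partial\Sigma$ is $-Jt$, whose image under $dX$ is $-\nu$. The divergence theorem, applied componentwise, gives
\[
 2\int_\Sigma N\,dA=\int_\Sigma\Delta_IX\,dA=\int_{\partial\Sigma}dX(-Jt)\,ds=-\int_{\partial\Sigma}\nu\,ds .
\]

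\emph{Step 2: $\int_\Sigma N\,dA=\A$.} We show that the $\R^3$-valued one-form $\omega=X\times dX$ satisfies $d\omega=2N\,dA$. In positively oriented conformal coordinates $z=u+iv$ we have $\omega=X\times X_u\,du+X\times X_v\,dv$. The mixed terms $X\times X_{uv}$ cancel, and what remains is
\[
 d\omega=(X_u\times X_v-X_v\times X_u)\,du\wedge dv=2X_u\times X_v\,du\wedge dv=2N\,dA .
\]
Since $\partial\Sigma$ carries the induced orientation, with $t$ positive, Stokes' theorem gives $\int_\Sigma N\,dA=\frac12\int_{\partial\Sigma}X\times dX=\A$. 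Together with Step 1 this proves \eqref{eq:flux}. This computation works on any compact oriented $\Sigma$ and needs no simple connectivity.

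\emph{Step 3: the circular case.} We take the $\kk$-component of \eqref{eq:flux}. By \eqref{eq:ab}, $\langle\nu,\kk\rangle=b$. Along $\Gamma$ we have $X\times dX=\gamma\times T\,ds=r\,e_r\times T\,ds=r\,\kk\,ds$, where $e_r\times T=\kk$ follows from \eqref{eq:circle} and $\ii\times\jj=\kk$. Since $X|_{\partial\Sigma}$ is a diffeomorphism onto $\Gamma$ with $s$ the arclength, this gives $\A=\frac12 rL\,\kk$, and therefore $\int_0^Lb\,ds=-rL$. Finally $|b|\le1$ gives $rL\le\int_0^L|b|\,ds\le L$, so $r\le1$.

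The only delicate point is orientation and sign bookkeeping: the sign of $\Delta_IX=2HN$ under the convention $dN=-dX\circ S$, the fact that $\nu$ is the inward conormal, and agreement between the induced boundary orientation and the parametrization \eqref{eq:circle}. I would double-check these signs on a round cap as a sanity test. The rest of the argument is routine.
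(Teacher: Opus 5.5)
Your proof is correct, and it is the standard derivation of the Korevaar--Kusner--Solomon balancing formula that the paper only quotes from the literature: integrate $\Delta_IX=2N$ with outward conormal $-Jt$, use $d(X\times dX)=2N\,dA$ with Stokes' theorem, and then take the $\kk$-component using $e_r\times T=\kk$. The signs you were worried about are right, as the small cap confirms: there $\nu=-\sqrt{1-r^2}\,e_r-r\,\kk$, so $b\equiv-r$ and $\A=\pi r^2\,\kk=\frac12 rL\,\kk$.
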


Together with \eqref{eq:darboux} they give the equivalences we shall use
without further comment: for a circular boundary,
\begin{equation}\label{eq:equivalences}
 \hbox{$\Gamma$ is a line of curvature}
 \iff \hbox{$a$ is constant}
 \iff \hbox{$b$ is constant}
 \iff k_n\equiv1 .
\end{equation}
Indeed $\tau_g\equiv0$ means $\alpha'\equiv0$; $a^2+b^2=1$ and continuity
make constancy of $a$ and of $b$ equivalent; and if $b$ is constant then
\eqref{eq:scalarflux} forces $b=-r$, that is, $k_n\equiv1$. The first
equivalence is Joachimsthal's theorem.

\section{The boundary lemma}\label{sec:boundary}

Along the boundary the developing equation \eqref{eq:cousin} becomes an
ordinary differential equation in $\Sf^3$, $q'=q\nu$ where $\nu=a e_r+b\kk$ 
and $|\nu|=1$, driven by the pair $(a,b)$ of
\eqref{eq:ab} alone. 

Two observations are in order. First, the solutions of
$q'=q\nu$ have unit speed, $|q'|=|q|\,|\nu|=1$ by (Q4) and
\eqref{eq:ab}, whatever the contact angle, because left translation is an
isometry of $\Hq$; so the pair $(a,b)$ does not enter the speed of $q$.
Second, the data are written in the rotating frame $\{e_r,T,\kk\}$,
which makes one full turn about $\kk$ as $s$ runs over $[0,L]$. 
Conjugation by $E(s)=\exp\bigl(s\kk/(2r)\bigr)$ is the rotation
carrying $\ii$ to $e_r(s)$ and fixing $\kk$; the half angle comes
from the double cover $\Sf^3\to\SO$.
Since $\kk$ commutes with $E$, setting $p=qE$ replaces $e_r$
by $\ii$ in the equation and adds the constant term $\kk/(2r)$.
This term is the generator $E^{-1}E'$, whereas the corresponding
frame rotates with angular velocity $\kk/r$. The speed of $p$ 
need not be constant: it is given by \eqref{eq:speed} below, 
and the term in $b$ is the one the flux controls.

This frame returns to itself after one turn, but its lift does not:
$E(L)=\exp(\pi\kk)=-{\bf 1}$. If the developing map closes up,
$q(L)=q(0)$, then $p$ joins a point of $\Sf^3$ to its antipode, so its
length is at least $\pi$, while the flux fixes its energy at $\pi^2/L$. By
Cauchy--Schwarz the length is also at most $\pi$, and the equality case
gives the conclusion. 

\begin{lemma}\label{lem:boundary}
Let $r>0$ and $L=2\pi r$. Let $a,b:[0,L]\to\R$ be continuous with
\[
 a^2+b^2=1,\qquad\int_0^Lb\,ds=-rL,
\]
and let $q:[0,L]\to\Sf^3$ solve $q'=q(a\,e_r+b\,\kk)$. Then $q(L)=q(0)$ if
and only if $b\equiv-r$ and $a$ is constant.
\end{lemma}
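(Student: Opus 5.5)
The plan is to carry out the change of frame described just before the statement. Then both implications become statements about a curve $p$ in $\Sf^3$ joining antipodal points. Set $E(s)=\exp\bigl(s\kk/(2r)\bigr)$ and $p=qE$. Since conjugation by $E$ is the rotation about $\kk$ by angle $s/r$, we have $E\,\ii\,E^{-1}=e_r$ and $E\kk=\kk E$. Hence $(a\,e_r+b\,\kk)E=E(a\,\ii+b\,\kk)$, and therefore
\[
 p'=q'E+qE'=p\Bigl(a\,\ii+\bigl(b+\tfrac1{2r}\bigr)\kk\Bigr).
\]
By (Q4) and $a^2+b^2=1$ the speed satisfies
\begin{equation}\label{eq:speed}
 |p'|^2=a^2+\Bigl(b+\frac1{2r}\Bigr)^2=1+\frac br+\frac1{4r^2}.
\end{equation}
Integrating \eqref{eq:speed} and using the hypothesis $\int_0^Lb\,ds=-rL$, the energy is
\[
 \int_0^L|p'|^2ds=L-L+\frac{L}{4r^2}=\frac{\pi}{2r}=\frac{\pi^2}{L}.
\]
This value is the same for every admissible pair $(a,b)$. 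Moreover $E(L)=\exp(\pi\kk)=-{\bf 1}$, so $p(0)=q(0)$ and $p(L)=-q(L)$.

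For the direction ``$q(L)=q(0)\Rightarrow$ constant data'', assume $q(L)=q(0)$. Then $p(L)=-p(0)$ is antipodal to $p(0)$ on the unit sphere $\Sf^3$, whose intrinsic distance between antipodes is $\pi$. Hence the length $\ell$ of $p$ satisfies $\ell\ge\pi$. On the other hand, Cauchy--Schwarz gives $\ell^2\le L\int_0^L|p'|^2ds=\pi^2$. So $\ell=\pi$, and equality holds in Cauchy--Schwarz, which forces $|p'|$ to be constant on $[0,L]$ (it is continuous, since $a,b$ are). By \eqref{eq:speed}, $b$ is then constant, and the flux condition forces $b\equiv-r$. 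Then $a^2=1-r^2$ is constant. If $r<1$, continuity prevents $a$ from changing sign, so $a$ is constant; if $r=1$, then $a\equiv0$.

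For the converse, assume $b\equiv-r$ and $a$ constant, with $a^2=1-r^2$. Then $p'=pw$ with the constant vector
\[
 w=a\,\ii+\Bigl(\frac1{2r}-r\Bigr)\kk,\qquad |w|^2=(1-r^2)+\Bigl(r^2-1+\frac1{4r^2}\Bigr)=\frac1{4r^2}.
\]
So $p(s)=p(0)\exp(sw)$. Writing $w=u/(2r)$ with $u$ a unit imaginary quaternion, we get $\exp(Lw)=\exp(\pi u)=-{\bf 1}$. Hence $p(L)=-p(0)$, and $q(L)=p(L)E(L)^{-1}=p(0)=q(0)$.

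The only delicate step is the frame computation. One must check the sign and half-angle so that $E\,\ii\,E^{-1}$ is exactly $e_r$ in the parametrization \eqref{eq:circle}, rather than its reflection. This is what makes the cross term in \eqref{eq:speed} equal to $+b/r$, so that the flux condition fixes the energy at exactly $\pi^2/L$. I would verify this first by differentiating $E\,\ii\,E^{-1}$ at $s=0$ and comparing with $e_r'(0)=\jj/r$. The length/energy comparison and the equality case are then standard.
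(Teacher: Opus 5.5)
Your proposal is correct and follows the paper's proof essentially step for step: the same gauge $p=qE$ with $E(s)=\exp\bigl(s\kk/(2r)\bigr)$, the same energy identity $\int_0^L|p'|^2\,ds=\pi^2/L$, the same Cauchy--Schwarz squeeze against the antipodal distance $\pi$, and the same explicit exponential $p(s)=p(0)\exp(s\Omega)$ for the converse. The only difference is cosmetic. You obtain $b\equiv-r$ from ``$b$ constant'' plus the flux condition, whereas the paper reads it off directly from $|p'|\equiv\pi/L=1/(2r)$ in the speed formula.
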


\begin{proof}
Set
\begin{equation}\label{eq:gauge}
 E(s):=\exp\Bigl(\frac{s\kk}{2r}\Bigr),\qquad p:=qE .
\end{equation}

Then
\[
 E\ii E^{-1}=e_r,\qquad E\kk=\kk E,\qquad
 E'=E\frac{\kk}{2r},\qquad
 E(0)={\bf 1},\quad E(L)=-{\bf 1}.
\]
In particular, $E$ is defined on the interval $[0,L]$, not on the
boundary circle. Since $(a e_r+b\kk)E=E(a\ii+b\kk)$, the product rule
yields
\[
 p'=q'E+qE'=qE(a\ii+b\kk)+qE\frac{\kk}{2r}.
\]
Hence
\begin{equation}\label{eq:pprime}
 p'=p\Omega,\qquad
 \Omega(s)=a\ii+\Bigl(b+\frac1{2r}\Bigr)\kk.
\end{equation}
Using $|p|=1$ and $a^2+b^2=1$, we obtain
\begin{equation}\label{eq:speed}
 |p'|^2=|\Omega|^2
 =a^2+\Bigl(b+\frac1{2r}\Bigr)^2
 =1+\frac br+\frac1{4r^2}.
\end{equation}
The prescribed integral of $b$ \eqref{eq:scalarflux} now gives
\begin{equation}\label{eq:energy}
 \int_0^L|p'|^2\,ds=\frac{\pi^2}{L}.
\end{equation}
This identity does not yet use the closure of $q$. Assume now that $q(L)=q(0)$. 
Since $E(0)={\bf 1}$ and $E(L)=-{\bf 1}$,
we have $p(L)=-p(0)$. Antipodal points of the unit sphere have intrinsic
distance $\pi$. Therefore \eqref{eq:energy} and Cauchy--Schwarz give
\begin{equation}\label{eq:saturation}
 \pi\leq\int_0^L|p'|\,ds
 \leq\Bigl(L\int_0^L|p'|^2\,ds\Bigr)^{1/2}=\pi.
\end{equation}
Equality in Cauchy--Schwarz forces $|p'|\equiv\pi/L=1/(2r)$, and
\eqref{eq:speed} then gives $b\equiv-r$. Thus $a^2=1-r^2$, and
continuity on the connected interval $[0,L]$ implies that $a$ is
constant.

Conversely, if $b\equiv-r$ and $a$ is constant, then $\Omega$ is a
constant imaginary quaternion with $ |\Omega|^2=\frac1{4r^2}$.
Hence $p(s)=p(0)\exp(s\Omega)$ and $L|\Omega|=\pi$, so
$\exp(L\Omega)=-{\bf 1}$. Consequently $p(L)=-p(0)$ 
and $q(L)=p(L)E(L)^{-1}=(-p(0))(-{\bf 1})=q(0)$.
\end{proof}

\begin{remark}\label{rem:fourier}
There is a second proof of the equality step. Assume again that $q(L)=q(0)$, 
so that  $p(L)=-p(0)$. Extend $p$ to $[0,2L]$ by $p(s+L)=-p(s)$; the 
extension  is an $H^1$ map of the circle of length $2L$ into $\R^4$ with 
only odd  frequencies, so the antiperiodic Wirtinger inequality gives 
$$\int_0^L|p'|^2\,ds\geq(\pi/L)^2\int_0^L|p|^2\,ds=\pi^2/L .$$

By \eqref{eq:energy} this is an equality, so $p(s)=A\cos(\pi s/L)+B\sin(\pi s/L)$ 
with $|A|=|B|=1$ and $\langle A,B\rangle=0$, and the speed is constant.
\end{remark}

\section{Proof of Theorem A}\label{sec:cap}

Let $\Sigma$ and $X$ be as in Theorem A, normalized as in
\eqref{eq:normalization}. Let $s$ be the arclength of $\Gamma$, which by \S\ref{sec:prelim} is also
the $I$-arclength of $\partial\Sigma$. Along the boundary $t$ is the unit
tangent, so
\[
 \beta(t)=dX(Jt)=\nu=a\,e_r+b\,\kk,
\]
and \eqref{eq:cousin} reads $q'=q(a\,e_r+b\,\kk)$, which is the equation of
Lemma \ref{lem:boundary}. Since $\Sigma$ is a disk, $q$ is single valued and
smooth on the whole closed disk by Lemma \ref{lem:extension}, so
$q(L)=q(0)$. The integral hypothesis is \eqref{eq:scalarflux}. Lemma
\ref{lem:boundary} therefore gives $b\equiv-r$ and $a$ constant, and
\eqref{eq:equivalences} implies that $\Sigma$ meets the plane $P$ along $\Gamma$
at a constant angle. Equivalently, $\Gamma$ is a line of curvature of
$\Sigma$; equivalently, the normal curvature of $\Gamma$ in $\Sigma$ is
identically one. By Nitsche's classical line-of-curvature argument
\cite[pp.~11--12]{Nitsche1985} (see also
\cite[Theorem~5.1.10]{Lopez2013}), regularity up to the boundary and the 
single covering of $\Gamma$, $X$ is an embedding onto a closed spherical cap.

\section*{Use of AI tools}
During the preparation of this work, the author used Claude/Code Fable 5.1 
(Anthropic) and ChatGPT/Codex Astra 6 (OpenAI) to assist with mathematical 
development, with the checking of proof arguments, with language editing, 
and with the preparation of \LaTeX{} code. 

{\bf How this paper came about, and the use of AI:} The problem reached 
the author through H. Rosenberg, who described long ago a candidate 
counterexample to the Spherical Cap Conjecture, and the author set
out to construct it.  LLMs, asked to examine that construction, produced instead 
a condition which would forbid such an example, formulated in terms of the 
monodromy of the developing map. The author recognized the monodromy as 
the holonomy of the Lawson cousin, and it is in that form that it became the 
boundary lemma. 

The analytic boundary lemma (Lemma~\ref{lem:boundary}) has also been
formalized in Lean~4 with Mathlib. The source supplied as LEAN Resource proves both
implications under the stated analytic hypotheses. This formalization
does not include the geometric reduction or the final surface classification.

The author is responsible for the mathematical content and for the final text.

% Add Statements and Declarations / Competing interests after the author supplies the factual declaration.
% See ../DECLARACIONES-PENDIENTES.md.

\end{document}